\DeclareMathOperator{\CH}{conv}
\DeclareMathOperator{\SPAN}{span}
\newcommand{\SET}[2]{\qty{#1:~#2}}
\newcommand{\Real}{\mathbb{R}}
\newcommand{\Espace}[1][n]{\mathbb{R}^{#1}}
\newcommand{\origin}{o}
\newcommand{\mline}[2]{\left\langle #1,#2 \right\rangle}
\newcommand{\msegment}[2]{\left[#1,#2\right]}
\newcommand{\msphere}[1][X]{S_{#1}}
\newcommand{\mcircle}[1][X]{\msphere}
\newcommand{\mball}[1][X]{B_{#1}}
\newcommand{\Mspace}{X}
\newcommand{\diam}[1]{\delta(#1)}
\newcommand{\WSH}[1]{\eta\left(#1\right)}
\newcommand{\TSH}[1]{\theta\left(#1\right)}
\newcommand{\CW}{set of constant width}
\newtheorem{thm}{Theorem}
\newtheorem{cor}[thm]{Corollary}
\newtheorem{lem}[thm]{Lemma}
\newtheorem{prop}[thm]{Proposition}
\theoremstyle{definition}
\newtheorem{rem}[thm]{Remark}
\begin{document}
\title{Uniqueness of completions and related topics}

\author{Chan He}
\address{Department of Mathematics, North University of China, Taiyuan 030051,
  China}
\email{hechan@nuc.edu.cn}

\author{Horst Martini}
\address{Faculty of Mathematics, Chemnitz University of Technology, 09107 Chemnitz, Germany}
\email{horst.martini@mathematik.tu-chemnitz.de}

\author{Senlin Wu}
\address{Department of Mathematics, North University of China, Taiyuan 030051,
  China}
\email{wusenlin@nuc.edu.cn}
\thanks{Senlin Wu is supported by the National Natural Science Foundation of China
(Grant numbers 11371114 and 11571085) and is the corresponding author.}

\keywords{Banach spaces, completeness, constant width set, normed linear space,
  uniqueness of completion}

\maketitle{}

\begin{abstract}
  A bounded subset of a normed linear space is said to be (diametrically)
  complete if it cannot be enlarged without increasing the diameter. A complete
  super set of a bounded set $K$ having the same diameter as $K$ is called a
  completion of $K$. In general, a bounded set may have different completions.
  We study normed linear spaces having the property that there exists a
  nontrivial segment with a unique completion. It turns out that this property
  is strictly weaker than the property that each complete set is a ball, and it
  is strictly stronger than the property that each set of constant width is a
  ball. Extensions of this property are also discussed.
\end{abstract}

\section{Introduction}
\label{sec:introduction}

All normed linear spaces
$\Mspace=(\Mspace,\norm{\cdot})$ 
considered throughout this paper are real and
\emph{nontrivial} (i.e., they contain at least two linearly independent
vectors). When such a space $\Mspace$ is finite dimensional, it is called a
\emph{Minkowski space} (in particular, a two-dimensional Minkowski space is
called a (normed or) \emph{Minkowski plane}). Let $\Mspace$ be such a space. 
We denote by $\mball$ and $\msphere$ the \emph{unit ball} and \emph{unit sphere}
of $\Mspace$, respectively, and by $S_{X^*}$ the unit sphere of the \emph{dual
  space} $X^*$ of $\Mspace$. Moreover, for $x\in\Mspace$ and $\gamma>0$, we
denote by $\mball(x,\gamma)$ the \emph{ball centred at $x$ having radius
  $\gamma$}. For subsets $A,B\subseteq\Mspace$ and $\lambda\in\Real$, we introduce
\begin{displaymath}
  A+B=\SET{x+y}{x\in A,~y\in B}\text{\quad and\quad}\lambda A=\SET{\lambda
    x}{x\in A}.
\end{displaymath}
A subset of $\Mspace$ is said to be \emph{nontrivial} if it contains at least
two points. For brevity, subsets of $\Mspace$ considered in the sequel are all
assumed to be nontrivial.
For $a,b\in\Mspace$, the set
\begin{displaymath}
  \msegment{a}{b}=\SET{\lambda a+(1-\lambda)b}{\lambda\in\msegment{0}{1}}
\end{displaymath}
is called the \emph{segment} connecting $a$ and $b$. When $a\neq b$, 
the set
\begin{displaymath}
  \mline{a}{b}=\SET{\lambda a+(1-\lambda)b}{\lambda\in\Real}
\end{displaymath}
is called the \emph{line} passing through $a$ and $b$.
Suppose that $\msegment{a}{b}$ is
nontrivial. Then $\flatfrac{(b-a)}{\norm{b-a}}$ is called a \emph{direction} of
$\msegment{a}{b}$. Clearly, a nontrivial segment has precisely two directions.
\par

Let $K\subset\Mspace$ be bounded. We denote by $\diam{K}$ the \emph{diameter} of
$K$, 
i.e.,
\begin{displaymath}
  \diam{K}=\sup\SET{\norm{x-y}}{x,y\in K}.
\end{displaymath}
If
\begin{displaymath}
  x\not\in K\Rightarrow \diam{K\cup\{x\}}>\diam{K},
\end{displaymath}
then we say that $K$ is \emph{diametrically maximal}, or \emph{diametrically
  complete}, or simply \emph{complete}. From the definition of complete sets it follows that
\begin{enumerate}
\item $\emptyset$ is not complete while a singleton is always complete;
\item a complete set is bounded, closed, and convex;
\item a closed ball in $\Mspace$ is complete;
\item if $K$ is complete and $\lambda\in\Real$, then $\lambda K$ is also
  complete.
\end{enumerate}

Let $K\subset \Mspace$ be bounded, closed, and convex. For each $f\in S_{X^*}$,
the number
\begin{displaymath}
  w_f(K)=\sup f(K)-\inf f(K)
\end{displaymath}
is called the \emph{width of $K$ in the direction of $f$}. Clearly, for each
$f\in S_{X^*}$ we have
\begin{align*}
  w_f(K)&=\sup\SET{f(x)}{x\in K}-\inf\SET{f(x)}{x\in K}\\
        &=\sup\SET{f(x-y)}{x,y\in K}\\
        &=\sup\SET{f(x)}{x\in K-K}.
\end{align*}
Note that (see, e.g., \cite[Proposition 4]{Caspani-Papini2015})
\begin{displaymath}
  \sup\SET{w_f(K)}{f\in\msphere[X^*]}=\diam{K}.
\end{displaymath}
If
\begin{displaymath}
  w_f(K)=\diam{K},~\forall f\in S_{X^*},
\end{displaymath}
then $K$ is called a \emph{\CW}. The concepts of complete sets and sets of
constant width are extensions of the concept of bodies of constant width in
Euclidean spaces. For more information about these concepts in Euclidean and
Minkowski spaces we refer to the surveys \cite{Chakerian-Groemer1983},
\cite{Heil-Martini1993}, and \cite{Martini-Swanepoel2004}.

\bigskip

Let $K\subset\Mspace$ again be bounded. If $K^C$ is a complete set containing
$K$ and $\diam{K^C}=\diam{K}$, then $K^C$ is called a \emph{completion} of $K$.
In general, $K$ has different completions. We denote by $\Gamma(K)$ the set of
completions of $K$. Then $\Gamma(\cdot)$ is a set-valued map, called the
\emph{diametric completion map}. Let $\mathcal{A}$ be a collection of subsets of
$\Mspace$. If the implication
\begin{displaymath}
  K,L\in \mathcal{A},~\lambda\in\msegment{0}{1}\Rightarrow \lambda K+(1-\lambda)L\in\mathcal{A}
\end{displaymath}
holds, then we say that $\mathcal{A}$ is \emph{convex}. If, for each bounded subset $K$
of $\Mspace$, $\Gamma(K)$ is convex, then we say that $\Gamma(\cdot)$ is
\emph{convex-valued}.
\par
\bigskip
The concepts of wide and tight spherical hulls are useful tools to study
completions of bounded sets. More precisely, the sets
\begin{equation}
  \label{eq:eta}
  \WSH{K}:=\bigcap\limits_{x\in K}\mball(x,\diam{K})\quad\text{and}\quad\TSH{K}:=\bigcap\limits_{x\in\eta(K)}\mball(x,\diam{K})
\end{equation}
are called the \emph{wide spherical hull} and the \emph{tight spherical hull} of
$K$, respectively. Clearly,
\begin{displaymath}
  \WSH{K}=\SET{x\in \Mspace}{\norm{x-y}\leq\diam{K},~\forall y\in K}.
\end{displaymath}
For a nontrivial bounded, closed, and convex set $K$ in a normed linear space
$\Mspace$, $\WSH{K}$ is the union of all completions of $K$, and $\TSH{K}$ is
the intersection of all completions of $K$. Moreover, $K$ has a unique
completion if and only if $\diam{\WSH{K}}=\diam{K}$. See, e.g.,
\cite{Baronti-Papini1995} and, for the case of Minkowski spaces,
\cite{Martini-Richter-Spirova2013}.

\section{The existence of segments having a unique completion}
\label{sec:existence-unique-completion}

Recall that the \emph{modulus of convexity} $\delta_\Mspace(\varepsilon)$
($\varepsilon\in[0,2]$) and the \emph{characteristic of convexity}
$\varepsilon_0(\Mspace)$ of $\Mspace$ are defined by 
\begin{gather*}
  \delta_\Mspace(\varepsilon)=\inf\SET{1-\norm{\frac{x+y}{2}}}{x,y\in\mball,~\norm{x-y}\geq\varepsilon},\\
  \varepsilon_0(\Mspace):=\sup\SET{\varepsilon\in[0,2]}{\delta_\Mspace(\varepsilon)=0},
\end{gather*}
respectively 
(see, e.g, \cite[p. 134]{Fuster2001}).
A Banach space $\Mspace$ is said to be \emph{uniformly nonsquare}
if $\varepsilon_0(\Mspace)<2$.

The starting point of this paper is the following result proved by P.L. Papini
in \cite{Papini2014}.
\begin{thm}[cf. Theorem 3.7 in \cite{Papini2014}]\label{thm:Papini}
  If a Banach space $\Mspace$ is uniformly nonsquare, then every segment in
  $\Mspace$ has different completions. In any Banach space whose dimension is at
  least $2$, there are segments having different completions.
\end{thm}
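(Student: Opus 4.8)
The plan is to reduce the whole statement to one question about the unit sphere and then to settle the two assertions separately. Every nontrivial segment has the form $m+r\msegment{-z}{z}$ with $r>0$ and $\norm{z}=1$, and both translating and scaling preserve the property of having more than one completion (scaling because $\lambda K$ is complete whenever $K$ is, while $\diam{\lambda K}=|\lambda|\diam{K}$, so $\Gamma(\lambda K)=\lambda\Gamma(K)$); so it suffices to treat $K=\msegment{-z}{z}$ with $\norm{z}=1$, for which $\diam{K}=2$. Since $y\in\WSH{K}$ means $\norm{y-x}\le 2$ for every $x\in K$, writing $x=tz$ with $t\in\msegment{-1}{1}$ and using convexity of $t\mapsto\norm{y-tz}$ gives
\begin{displaymath}
  \WSH{K}=\SET{y\in\Mspace}{\norm{y-z}\le 2\text{ and }\norm{y+z}\le 2},
\end{displaymath}
which is symmetric about $\origin$; and the diameter of a centrally symmetric convex set $C$ equals $2\sup\SET{\norm{w}}{w\in C}$ (the upper bound by the triangle inequality, the lower bound from $\norm{w-(-w)}=2\norm{w}$). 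By the criterion recalled in the introduction, $K$ has more than one completion exactly when $\diam{\WSH{K}}>2$, that is, exactly when there is a point $p$ with $\norm{p}>1$ and $\norm{p-z}\le 2$, $\norm{p+z}\le 2$. Hence the theorem reduces to two claims: \emph{(i)} if $\Mspace$ is uniformly nonsquare, such a $p$ exists for \emph{every} unit vector $z$; \emph{(ii)} such a $p$ exists for \emph{some} unit vector $z$, assuming only $\DIM\Mspace\ge 2$.

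For \emph{(i)}, I would first recall that uniform nonsquareness is equivalent to $J(\Mspace):=\sup\SET{\min(\norm{x+y},\norm{x-y})}{x,y\in\msphere}<2$: if $J(\Mspace)=2$, pick $x_k,y_k\in\msphere$ with $\norm{x_k+y_k}\to 2$ and $\norm{x_k-y_k}\to 2$; then $1-\norm{\tfrac12(x_k+y_k)}\to 0$ while $\norm{x_k-y_k}\to 2$, which forces $\delta_\Mspace(\varepsilon)=0$ for every $\varepsilon<2$, i.e.\ $\varepsilon_0(\Mspace)=2$. Now fix $c$ with $J(\Mspace)<c<2$ and a unit vector $z$, and put $A=\SET{x\in\msphere}{\norm{x+z}\ge c}$ and $A'=\SET{x\in\msphere}{\norm{x-z}\ge c}$. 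These are closed and nonempty ($z\in A$, $-z\in A'$, as $\norm{2z}=2\ge c$), satisfy $A'=-A$, and are disjoint, since a common point would give $\min(\norm{x+z},\norm{x-z})\ge c>J(\Mspace)$. As $\msphere$ is connected ($\DIM\Mspace\ge 2$), $A\cup A'\ne\msphere$, so some unit vector $x$ has $\norm{x-z}<c$ and $\norm{x+z}<c$; then $p=(1+\varepsilon)x$ with $0<\varepsilon\le 2-c$ works: $\norm{p}=1+\varepsilon>1$ and $\norm{p\pm z}\le\norm{x\pm z}+\varepsilon<c+(2-c)=2$.

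For \emph{(ii)}, pass to a two-dimensional subspace $Y\subseteq\Mspace$ (norms of its vectors are unchanged, so producing $p$ inside $Y$ suffices). If $\mball[Y]$ is strictly convex, take linearly independent unit vectors $x,z\in Y$: then $\tfrac12(x\pm z)$ is a nontrivial convex combination of two distinct unit vectors, so $\norm{x\pm z}<2$, and $p=(1+\varepsilon)x$ works for small $\varepsilon>0$. If $\mball[Y]$ is not strictly convex, choose a nondegenerate segment $\msegment{a}{b}\subseteq\msphere[Y]$; here $a\ne-b$ (otherwise $\origin=\tfrac12(a+b)\in\msphere[Y]$), so $z:=\tfrac12(a+b)$ is a unit vector in the relative interior of $\msegment{a}{b}$. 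Pick a unit vector $x\in Y$ lying on neither of the (distinct, parallel) lines $\mline{a}{b}$, $\mline{-a}{-b}$ --- possible since $\msphere[Y]$ is not contained in a union of two lines. Then $\norm{x+z}<2$: were it $2$, we would have $\tfrac12(x+z)\in\msphere[Y]$, hence the chord $\msegment{x}{z}\subseteq\msphere[Y]$, but a suitable neighbourhood of $z$ in $\msphere[Y]$ lies in $\msegment{a}{b}\subseteq\mline{a}{b}$, so a short initial piece of $\msegment{x}{z}$ lies on $\mline{a}{b}$ and $x\in\mline{a}{b}$, a contradiction; similarly $\norm{x-z}<2$, using that $-z$ is relatively interior to $\msegment{-a}{-b}\subseteq\msphere[Y]$ and $x\notin\mline{-a}{-b}$. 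Again $p=(1+\varepsilon)x$ with small $\varepsilon>0$ has $\norm{p}>1$ and $\norm{p\pm z}<2$.

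The reduction above is pure bookkeeping, and claim \emph{(i)} is soft: once uniform nonsquareness has been converted into the gap $c<2$, all the weight is carried by the connectedness of $\msphere$. I expect the crux to be claim \emph{(ii)}, which has to be carried out in an arbitrary normed plane; the delicate case is when $\mball[Y]$ is not strictly convex, where the decisive fact --- that a boundary line segment abutting the relative interior of another boundary line segment must be collinear with it --- is exactly what forces the witnessing point $p$ off the unit ball.
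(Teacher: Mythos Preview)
Your argument is correct. The reduction to finding a point $p$ with $\norm{p}>1$ and $\norm{p\pm z}\le 2$ is clean, and both claims are handled properly. In~(i) the connectedness argument is sound: $A$ and $A'$ are nonempty disjoint closed subsets of the connected set $\msphere$, so their union cannot exhaust $\msphere$. In~(ii) the only point that deserves a word of caution is the sentence ``a suitable neighbourhood of $z$ in $\msphere[Y]$ lies in $\msegment{a}{b}$'': strictly speaking it lies on the \emph{line} $\mline{a}{b}$ (equivalently, in the maximal face containing $\msegment{a}{b}$), but since you only use that the short initial piece of $\msegment{x}{z}$ lies on $\mline{a}{b}$, the conclusion is unaffected. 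The underlying fact --- that if $z$ is in the relative interior of a boundary segment of a planar convex body then every nearby boundary point lies on the same supporting line --- is exactly what you invoke.

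As for comparison with the paper: the paper does \emph{not} supply its own proof of this statement; it is quoted from Papini and used as motivation. What the paper does instead is prove the sharper Theorem~\ref{thm:char-unique-completion-segment}, which characterises unique completion of $\msegment{-u}{u}$ by the condition that every two-dimensional section through $u$ has a parallelogram unit circle with $u$ as a vertex. From that characterisation both halves of the present theorem follow immediately (a uniformly nonsquare space has no two-dimensional section isometric to $\ell_\infty^2$; and in any planar section, a non-vertex $u$ of $\msphere[L]$ --- or any $u$ at all if $\msphere[L]$ is not a parallelogram --- yields a segment with many completions). So your direct argument via the wide spherical hull and the paper's structural route via Theorem~\ref{thm:char-unique-completion-segment} arrive at the same place by genuinely different means: yours is self-contained and quantitative (the gap $2-c$ is explicit), while the paper's gives a full characterisation rather than just existence.
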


Therefore, it is natural to consider the following property:
\begin{itemize}
\item[($U_1$)] \emph{There exists a segment $S\subset\Mspace$ which has a unique
  completion.}
\end{itemize}

Clearly, if a nontrivial segment $\msegment{a}{b}$ has a unique completion $C$,
then $C$ is the ball centered at $\flatfrac{(a+b)}{2}$ having radius
$\flatfrac{\norm{a-b}}{2}$.

Since the metric induced by a norm is translation invariant, one can easily verify
the following
\begin{lem}
  Let $\Mspace$ be a normed linear space, and $\msegment{a}{b}$ be a nontrivial
  segment having direction $u$ in $\Mspace$. If $\msegment{a}{b}$ has a unique
  completion in $\Mspace$, then each nontrivial segment in $\Mspace$ having
  direction $u$ has a unique completion.
\end{lem}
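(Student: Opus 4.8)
The plan is to reduce every segment with direction $u$ to the given one via a translation and a positive dilation, and to observe that both operations respect the completion map. Concretely, suppose $\msegment{a'}{b'}$ is a nontrivial segment in $\Mspace$ with direction $u$. Then $(b'-a')/\norm{b'-a'} = u = (b-a)/\norm{b-a}$, so $b'-a' = \lambda(b-a)$ with $\lambda := \norm{b'-a'}/\norm{b-a} > 0$; setting $v := a' - \lambda a$ one checks directly that $\lambda a + v = a'$ and $\lambda b + v = b'$, hence $\msegment{a'}{b'} = \lambda\,\msegment{a}{b} + v$. So it suffices to prove the following general fact: for every bounded set $K \subset \Mspace$, every scalar $\lambda \neq 0$, and every $v \in \Mspace$, one has $\Gamma(\lambda K + v) = \SET{\lambda C + v}{C \in \Gamma(K)}$; in particular $K$ has a unique completion if and only if $\lambda K + v$ does.

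For the inclusion ``$\supseteq$'', recall that the metric of $\Mspace$ is translation invariant, so $\diam{\lambda K + v} = \diam{\lambda K} = |\lambda|\,\diam{K}$, and the family of complete sets is closed under translations (translations are isometries) and under scalar multiples (property~(4) in Section~\ref{sec:introduction}). Hence, if $C \in \Gamma(K)$, then $\lambda C + v$ is complete, contains $\lambda K + v$, and satisfies $\diam{\lambda C + v} = |\lambda|\,\diam{K} = \diam{\lambda K + v}$, so $\lambda C + v \in \Gamma(\lambda K + v)$. Applying this inclusion with $(K,\lambda,v)$ replaced by $(\lambda K + v,\ 1/\lambda,\ -v/\lambda)$ gives the reverse inclusion ``$\subseteq$'', and therefore equality. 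Since $C \mapsto \lambda C + v$ is a bijection, $\Gamma(K)$ and $\Gamma(\lambda K + v)$ have the same cardinality, which proves the displayed equivalence, and then the lemma follows by taking $K = \msegment{a}{b}$, $\lambda = \norm{b'-a'}/\norm{b-a}$, and $v = a'-\lambda a$.

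There is no serious obstacle here; the only points to watch are that preserving the direction $u$ forces the dilation factor to be \emph{strictly positive} (so that we recover $\msegment{a'}{b'}$ and not the oppositely oriented segment), and the routine substitution check for $v$. As an alternative, one may avoid $\Gamma(\cdot)$ altogether and use the criterion stated in Section~\ref{sec:introduction}: a direct substitution in the definition~\eqref{eq:eta} of $\eta$ yields $\WSH{\lambda K + v} = \lambda\,\WSH{K} + v$, so $\diam{\WSH{\lambda K + v}} = |\lambda|\,\diam{\WSH{K}}$ equals $\diam{\lambda K + v} = |\lambda|\,\diam{K}$ precisely when $\diam{\WSH{K}} = \diam{K}$, i.e., exactly when $K$ has a unique completion.
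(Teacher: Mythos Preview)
Your argument is correct and is exactly the elaboration the paper has in mind: the paper gives no formal proof but simply remarks, just before the lemma, that it follows from translation invariance of the norm (together with the stability of completeness under scalar multiples listed as property~(4) in the introduction), and you have carefully spelled out these details via the bijection $C\mapsto \lambda C+v$ between $\Gamma(K)$ and $\Gamma(\lambda K+v)$.
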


By this lemma it suffices to consider segments having the form
$\msegment{-u}{u}$, $u\in\msphere$, to study property ($U_1$).

\begin{thm}\label{thm:char-unique-completion-segment}
  Let $u\in\msphere$. Then the following facts are equivalent:
  \begin{enumerate}
  \item $S:=\msegment{-u}{u}$ has a unique completion,
  \item the unit circle $\msphere[L]$ of each two-dimensional subspace $L$ of
    $\Mspace$ containing $u$ is a parallelogram having $u$ as one of its
    vertices.
  \end{enumerate}
\end{thm}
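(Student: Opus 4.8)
The plan is to restate condition~(1) via the criterion from the introduction, reduce it to a statement about a two-dimensional ``lens'', and then analyze that lens directly. First I would record the reduction. Since $\diam{S}=\norm{2u}=2$ and $y\mapsto\norm{x-y}$ is convex on $\msegment{-u}{u}$ (so its maximum over $S$ is attained at an endpoint),
\[
  \WSH{S}=\SET{x}{\norm{x-y}\le 2,\ \forall y\in S}=\mball(u,2)\cap\mball(-u,2).
\]
This set contains $\mball$ and is symmetric about $\origin$ (replacing $x$ by $-x$ interchanges the two balls), and $\mball$ is a completion of $S$ (it is complete, contains $S$, and has diameter $2$). Consequently $\diam{\WSH{S}}=2$ if and only if $\WSH{S}=\mball$: if $\WSH{S}\ne\mball$, any $x\in\WSH{S}$ with $\norm x>1$ also satisfies $-x\in\WSH{S}$, giving $\diam{\WSH{S}}\ge\norm{2x}>2$, while the reverse implication is clear. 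By the criterion ``$S$ has a unique completion iff $\diam{\WSH{S}}=\diam{S}$'', property~(1) is therefore equivalent to the identity $\mball(u,2)\cap\mball(-u,2)=\mball$. For a two-dimensional subspace $L\ni u$ one has $\mball[L]=\mball\cap L$ and $\mball[L](\pm u,2)=\mball(\pm u,2)\cap L$, so intersecting with $L$ shows this identity forces its analogue in every such $L$; conversely, a point of $\mball(u,2)\cap\mball(-u,2)$ is either a scalar multiple of $u$ (whence $\norm x\le1$ by a one-dimensional estimate) or lies in the plane $\SPAN\{u,x\}$, where the analogous identity gives $\norm x\le1$. Hence (1) is equivalent to: \emph{for every two-dimensional subspace $L\ni u$ one has $\mball[L](u,2)\cap\mball[L](-u,2)=\mball[L]$}, and it remains to prove that in a Minkowski plane $L$ with $u\in\msphere[L]$ this holds if and only if $\msphere[L]$ is a parallelogram with $u$ as a vertex.

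For the ``if'' direction I would just compute: writing the vertices of $\msphere[L]$ as $\pm u,\pm w$ and using the coordinates $(a,b)\mapsto au+bw$, in which $\norm{au+bw}=\abs a+\abs b$, a short case distinction ($\abs a\le1$ versus $\abs a>1$) shows that the inequalities $\abs{a-1}+\abs b\le2$ and $\abs{a+1}+\abs b\le2$ together are equivalent to $\abs a+\abs b\le1$; this is precisely $\mball[L](u,2)\cap\mball[L](-u,2)=\mball[L]$.

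The substance is the ``only if'' direction. Assume $\mball[L](u,2)\cap\mball[L](-u,2)=\mball[L]$. The circle $\BD\mball[L](u,2)$ is connected and contains both $-u$ and $3u$, and along it the continuous function $z\mapsto\norm{z+u}$ runs from $0$ to $4$; hence it equals $2$ at some point $x$, so $\norm{x-u}=\norm{x+u}=2$. Then $x$ lies in $\mball[L](u,2)\cap\mball[L](-u,2)=\mball[L]$, i.e.\ $\norm x\le1$, and since $\norm{\tfrac12(x+u)}=1$ the inequality $1\le\tfrac12(\norm x+\norm u)$ forces $\norm x=1$. Thus $x$ and $u$ lie on $\msphere[L]$, and so does the midpoint $\tfrac12(x+u)$ of the chord $\msegment{x}{u}$; since a chord of $\mball[L]$ whose midpoint lies on $\msphere[L]$ lies entirely on $\msphere[L]$ (a standard consequence of convexity of the norm), we get $\msegment{x}{u}\subseteq\msphere[L]$, and likewise $\msegment{x}{-u}\subseteq\msphere[L]$ from $\norm{x-u}=2$. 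The relations $\norm{x\pm u}=2$ rule out $x\in\Real u$, so $P:=\CH\{u,x,-u,-x\}$ is a nondegenerate $\origin$-symmetric parallelogram with $u$ a vertex; by central symmetry of $\msphere[L]$ its other two sides $\msegment{-x}{-u}$ and $\msegment{-x}{u}$ also lie on $\msphere[L]$, so $\BD P\subseteq\msphere[L]$ while $P\subseteq\mball[L]$. Finally $\mball[L]=P$, for a point of $\mball[L]\setminus P$ would, on the segment joining it to $\origin\in\INT P$, meet $\BD P\subseteq\BD\mball[L]$ at a point interior to $\mball[L]$, which is impossible. Hence $\msphere[L]$ is a parallelogram with $u$ as a vertex.

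I expect the ``only if'' direction to be the main obstacle — the two reductions are essentially bookkeeping and the ``if'' part is a computation. Within it, the decisive observation is that a common boundary point of the two circles $\BD\mball[L](\pm u,2)$ is automatically forced onto the unit circle and joined to both $u$ and $-u$ by unit-circle segments, after which central symmetry and convexity finish the argument with no further estimates. The step needing the most care is the dimension reduction, since $\WSH{S}=\mball(u,2)\cap\mball(-u,2)$ becomes a genuinely two-dimensional object only after intersection with a plane through $u$.
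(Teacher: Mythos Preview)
Your argument is correct. The reduction of~(1) to the identity $\mball(u,2)\cap\mball(-u,2)=\mball$ and the passage to two-dimensional subspaces match the paper's proof, and your ``if'' direction is the same $\ell^1$-coordinate computation (you even derive the sharper conclusion $|a|+|b|\le 1$ rather than the paper's $|\alpha|,|\beta|\le 1$).

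The ``only if'' direction is where you diverge. The paper invokes the existence property of isosceles orthogonality to pick $v\in\msphere[L]$ with $\norm{u+v}=\norm{u-v}$, and then argues by contradiction: if $\gamma:=2-\norm{u+v}>0$, the point $(1+\gamma)v$ lies in $\WSH{S}\setminus\mball$. Once $\norm{u+v}=\norm{u-v}=2$ is established, the paper declares the parallelogram conclusion without further comment. You instead manufacture the second vertex directly by an intermediate-value argument on $\BD\mball[L](u,2)$, obtaining $x$ with $\norm{x\pm u}=2$, and then carry out explicitly the convexity argument (midpoint on the sphere forces the whole chord onto the sphere, central symmetry gives all four sides, and a boundary-crossing argument rules out $\mball[L]\supsetneq P$). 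Your route is more self-contained---it cites no external orthogonality lemma and fills in the step the paper leaves implicit---at the cost of being longer; the paper's version is terser but leans on a quoted existence result and stops short of justifying the final parallelogram claim.
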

\begin{proof}
  It is clear that
  \begin{displaymath}
    \WSH{S}=\mball(-u,2)\cap \mball(u,2). 
  \end{displaymath}
  Since $\mball$ is a completion of $S$, $S$ has a unique completion if and only
  if $\WSH{S}=\mball$.
  \par
  First suppose that (2) holds. Since $\mball$ is a completion of $S$ and
  $\WSH{S}$ is the union of all completions of $S$, we have
  $\mball\subseteq\WSH{S}$. Pick an arbitrary point $x$ from $\WSH{S}$. Then
  \begin{equation}\label{eq:in-WSH}
    \norm{x-u},\norm{x+u}\leq 2.
  \end{equation}
  If $x\in\SPAN\qty{u}$, then $x\in S\subseteq\mball$. In the following we
  assume that $x$ and $u$ are linearly independent. Let $L=\SPAN\qty{u,x}$ and
  $v\in\msphere[L]$ be a point such that $\qty{\pm u,\pm v}$ is the vertex set
  of the parallelogram $\msphere[L]$. Suppose that $x=\alpha u+\beta v$. From
  \eqref{eq:in-WSH} it follows that
  \begin{gather*}
    \norm{\alpha u+\beta v-u}=\norm{(\alpha-1)u+\beta v}=|\alpha-1|+|\beta|\leq
    2,\\
    \norm{\alpha u+\beta v+u}=\norm{(\alpha+1)u+\beta v}=|\alpha+1|+|\beta|\leq
    2.
  \end{gather*}
  Therefore, $|\alpha|,|\beta|\leq 1$, which shows that
  $x\in\mball[L]\subseteq\mball$. It follows that $\WSH{S}\subseteq\mball$. Thus
  $\WSH{S}=\mball$.
  \par
  Conversely, we assume that $\WSH{S}=\mball$. Let $L$ be an arbitrary
  two-dimensional subspace of $\Mspace$ containing $u$, and $v\in\msphere[L]$ be
  a point such that $\norm{u+v}=\norm{u-v}$ (the existence of $v$ follows from
  the existence property of isoceles orthogonality,
  see e.g., \cite{Alonso-Martini-Wu2012}). To show that $\msphere[L]$ is a
  parallelogram having $u$ and $v$ as vertices, we only need to prove that
  \begin{displaymath}
    \norm{u+v}=\norm{u-v}=2.
  \end{displaymath}
  Suppose the contrary, namely that $\gamma:=2-\norm{u+v}>0$. Then
  $(1+\gamma)v\not\in\mball$ and
  \begin{displaymath}
    \norm{(1+\gamma)v-u}\leq\norm{v-u}+\gamma=2,~\norm{(1+\gamma)v+u}\leq\norm{u+v}+\gamma=2.
  \end{displaymath}
  It follows that $(1+\gamma)v\in\WSH{S}$, a contradiction to the hypothesis
  that $\WSH{S}=\mball$.
\end{proof}

The following corollary, which is a direct consequence of Theorem
\ref{thm:char-unique-completion-segment}, improves the first part of Theorem
\ref{thm:Papini}.

\begin{cor}
  A normed linear space $\Mspace$ has property ($U_1$) if and only if there exists
  $u\in\msphere$ such that the unit circle $\msphere[L]$ of each two-dimensional
  subspace $L$ of $\Mspace$ containing $u$ is a parallelogram having $u$ as one
  of its vertices.
\end{cor}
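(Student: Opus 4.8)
The plan is to unfold property~($U_1$) and reduce it to the normalized situation handled by Theorem~\ref{thm:char-unique-completion-segment}, using the preceding Lemma together with the translation invariance and positive homogeneity of the diameter.

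First I would establish that $\Mspace$ has property~($U_1$) if and only if there exists $u\in\msphere$ for which the segment $\msegment{-u}{u}$ has a unique completion. One implication is immediate. For the other, let $\msegment{a}{b}$ be a nontrivial segment with a unique completion and set $u=(b-a)/\norm{b-a}\in\msphere$, a direction of $\msegment{a}{b}$. Since $\diam{\cdot}$ is translation invariant and satisfies $\diam{\lambda A}=\abs{\lambda}\,\diam{A}$, and since completeness is preserved under scaling (basic property~(4)) and trivially under translation, the affine map $T(y)=\tfrac{2}{\norm{b-a}}\qty(y-\tfrac{a+b}{2})$ sends $\msegment{a}{b}$ to $\msegment{-u}{u}$ and induces a bijection between the completions of $\msegment{a}{b}$ and those of $\msegment{-u}{u}$; in particular $\msegment{-u}{u}$ has a unique completion. (Alternatively, the Lemma applied to $\msegment{a}{b}$ and its direction $u$ gives directly that $\msegment{-u}{u}$, being a segment of direction $u$, has a unique completion.)

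Next I would apply Theorem~\ref{thm:char-unique-completion-segment} verbatim: for $u\in\msphere$, the segment $\msegment{-u}{u}$ has a unique completion if and only if the unit circle $\msphere[L]$ of every two-dimensional subspace $L$ of $\Mspace$ containing $u$ is a parallelogram having $u$ as one of its vertices. Composing this with the equivalence from the previous paragraph yields exactly the statement of the corollary.

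I do not expect any genuine obstacle; this is a direct corollary as advertised. The only point deserving a line of care is the normalization step, namely checking that $T$ maps completions bijectively onto completions and closed balls onto closed balls, which is immediate from the two invariance properties of $\diam{\cdot}$ recalled above. Everything else is a straight appeal to the Lemma and to Theorem~\ref{thm:char-unique-completion-segment}.
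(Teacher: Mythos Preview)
Your proposal is correct and follows exactly the route the paper intends: reduce property~($U_1$) to the existence of some $u\in\msphere$ for which $\msegment{-u}{u}$ has a unique completion (via the preceding Lemma, or equivalently your affine map~$T$), and then invoke Theorem~\ref{thm:char-unique-completion-segment}. The paper in fact states this corollary without a separate proof, calling it ``a direct consequence'' of that theorem, so your write-up simply spells out the details the paper leaves implicit.
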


\section{Further properties related to complete sets}
\label{sec:further-properties}

For our purpose, we consider now the following properties that a normed linear
space $\Mspace$ might have:
\begin{enumerate}
\item[(A)] \emph{every complete set in the space is of constant width;}
\item[(B)] \emph{every complete set in the space is a ball;}
\item[(C)] \emph{every set of constant width in the space is a ball;}
\item[(D)] \emph{the sum of any two complete sets is complete;}
\item[(E)] \emph{the diametric completion map $\Gamma$ is convex-valued.}
\end{enumerate}

Let $\Mspace$ be a real Banach space. Then $\Mspace$ has property (B) if and
only if $\Mspace$ is a $\mathcal{P}_1$ space, i.e., $\Mspace$ is norm
1-complemented in every Banach space containing it, see \cite{Davis1977}. An
earlier result by Nachbin (see \cite{Nachbin1950}) shows that $\Mspace$ is a
$\mathcal{P}_1$ space if and only if, for every collection
$\qty{\mball\qty(x_i,\gamma_i)}_{i\in I}$ of pairwise intersecting balls, we
always have $\bigcap\limits_{i\in I}\mball\qty(x_i,\gamma_i)\neq\emptyset$, or,
equivalently, the Banach space $\Mspace$ is \emph{hyperconvex}. It is shown in
\cite{Aronszajn-Panitchpakdi1956} that the following theorem holds true.

\begin{thm}[cf. Theorem 2 in
  Section 4 of
  \cite{Aronszajn-Panitchpakdi1956}]\label{thm:existence-extreme-point}
  The unit ball of a hyperconvex Banach space has extreme points.
\end{thm}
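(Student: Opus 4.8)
The plan is to produce an extreme point of the unit ball $\mball$ of the given hyperconvex Banach space $\Mspace$ by a Zorn-type argument on its closed faces, hyperconvexity entering precisely to keep descending chains of faces from running empty. Recall that a nonempty closed convex set $F\subseteq\mball$ is a \emph{face} of $\mball$ if $\tfrac12(x+y)\in F$ with $x,y\in\mball$ forces $x,y\in F$; a singleton $\qty{e}$ is a face of $\mball$ if and only if $e$ is an extreme point of $\mball$, a nonempty intersection of faces is a face, and a face of a face of $\mball$ is again a face of $\mball$. First I would record that $\mball$ has proper closed faces: given $x_{0}\in\msphere$, Hahn--Banach provides $f\in\msphere[X^{*}]$ with $f(x_{0})=1$, and then $F_{f}:=\SET{x\in\mball}{f(x)=1}$ is a nonempty closed face not containing $\origin$, hence a proper one.

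Next I would order the nonempty closed faces of $\mball$ by reverse inclusion and invoke Zorn's lemma. An intersection of a chain of nonempty closed faces is automatically a closed face, so the point to check --- the heart of the matter --- is that it stays nonempty; this is exactly where the binary intersection property (hyperconvexity) has to be used. The mechanism I have in mind: to a face $F$ one attaches the balls $\mball\bigl(c,\sup_{x\in F}\norm{x-c}\bigr)$, $c\in\Mspace$; along a chain $\qty{F_{\alpha}}$, any two such balls --- coming from $F_{\alpha}$ and $F_{\beta}$ with, say, $F_{\beta}\subseteq F_{\alpha}$ --- both contain the nonempty set $F_{\beta}$ and hence meet, so by hyperconvexity all of them have a common point. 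I expect the main obstacle to lie here: this argument only forces the common point into each \emph{ball hull} of the $F_{\alpha}$, not obviously into $F_{\alpha}$ itself, and in a hyperconvex space a closed face of $\mball$ need not be an intersection of balls, so closing the gap requires genuine care. (A heavier alternative bypasses it entirely: a hyperconvex --- i.e.\ $\mathcal{P}_{1}$ --- Banach space is isometric to some $C(K)$ with $K$ extremally disconnected, and there the constant function $1$ is manifestly an extreme point of the unit ball.)

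Granting the chain condition, Zorn's lemma delivers a minimal nonempty closed face $F_{0}$; it then remains to see that $F_{0}$ is a singleton, and for this I would use only the Bishop--Phelps theorem. If $F_{0}$ contained distinct points $p,q$, I would pick $g\in\msphere[X^{*}]$ with $g(p)\neq g(q)$; since the support functionals of the closed bounded convex set $F_{0}$ are norm-dense in $X^{*}$, there is a support functional $f$ of $F_{0}$ so close to $g$ that it is still non-constant on $F_{0}$. Then $G:=\SET{x\in F_{0}}{f(x)=\max_{y\in F_{0}}f(y)}$ is a nonempty closed face of $F_{0}$ (the maximum being attained), hence a closed face of $\mball$, and $G\subsetneq F_{0}$ because $f$ is non-constant on $F_{0}$ --- contradicting minimality. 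Hence $F_{0}=\qty{e}$ for some $e$, and since $\qty{e}$ is a face of $\mball$, the point $e$ is an extreme point of $\mball$, which completes the proof.
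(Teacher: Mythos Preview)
The paper does not prove this theorem; it is quoted from \cite{Aronszajn-Panitchpakdi1956} and used as a black box in the next proof, so there is no in-paper argument to compare against.

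On its own merits: your first step (existence of proper closed faces) and your last step (a minimal closed face is a singleton, via Bishop--Phelps) are correct. The gap you flag in the chain condition is genuine and not cheaply repaired. Hyperconvexity governs intersections of balls, and the balls $\mball\bigl(c,\sup_{x\in F}\norm{x-c}\bigr)$ you attach to a face $F$ recover only its ball hull, which can properly contain $F$ even in a hyperconvex space: in $\ell_\infty=C(\beta\mathbb{N})$, for a free ultrafilter $\mathcal{U}$ the exposed face $F=\SET{x\in\mball}{\lim_{\mathcal{U}}x_n=1}$ has ball hull equal to all of $\mball$ (any ball $\mball(h,r)\supseteq F$ is forced to have $r\ge1$ and $\norm{h}_\infty\le r-1$, hence contains every $y$ with $\norm{y}_\infty\le1$). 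So the common point your mechanism produces need not lie in any $F_\alpha$. A natural repair is to run Zorn instead on nonempty sets of the form $\mball\cap\bigcap_{c\in A}\mball(c,1)$ with $A\subseteq 2\msphere$, for which the chain condition \emph{does} follow directly from hyperconvexity; but then the ``minimal $\Rightarrow$ singleton'' step no longer comes for free from Bishop--Phelps and needs its own argument. Your parenthetical route through the Nachbin--Goodner--Kelley structure theorem is valid, but it imports machinery substantially heavier than the target statement. As it stands, this is an honest plan with an honestly located hole rather than a proof.
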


\begin{thm}
  If a Banach space $\Mspace$ has property (B), then it has also property
  ($U_1$).
\end{thm}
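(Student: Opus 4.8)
The plan is to use the characterization from Theorem~\ref{thm:char-unique-completion-segment}: it suffices to exhibit a single direction $u\in\msphere$ such that every two-dimensional subspace $L$ containing $u$ has a parallelogram as its unit circle with $u$ among the vertices. So the first task is to produce such a $u$, and the natural candidate is an extreme point of $\mball$. By Theorem~\ref{thm:existence-extreme-point}, since property (B) for a Banach space is equivalent to hyperconvexity (via the chain (B) $\Leftrightarrow$ $\mathcal{P}_1$ $\Leftrightarrow$ hyperconvex discussed in the text, citing \cite{Davis1977}, \cite{Nachbin1950}, \cite{Aronszajn-Panitchpakdi1956}), the unit ball $\mball$ has an extreme point $u\in\msphere$. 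Fix this $u$.

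Now let $L$ be an arbitrary two-dimensional subspace containing $u$. I would argue that $\msphere[L]$ must be a parallelogram with $u$ as a vertex. The key point is that property (B) passes to two-dimensional subspaces: a two-dimensional subspace of a hyperconvex space is itself hyperconvex (pairwise intersecting balls in $L$ are pairwise intersecting in $\Mspace$, hence have a common point in $\Mspace$; but one must check this common point can be taken in $L$ — this is where I would invoke that hyperconvex spaces are $1$-complemented, so the norm-$1$ projection onto $L$ carries the common point into $L$ while only shrinking distances, keeping it in all the balls). Hence $L$ also has property (B): every complete set in $L$ is a ball. In a Minkowski plane this is a very strong condition — it is classical that a Minkowski plane in which every complete set (equivalently, every set of constant width) is a disc must have a parallelogram as unit circle. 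Indeed, in any Minkowski plane the intersection $\mball[L](-u,2)\cap\mball[L](u,2)$ is a complete set (a completion of the segment $\msegment{-u}{u}$); if it must be a ball, it must equal $\mball[L]$, and then Theorem~\ref{thm:char-unique-completion-segment} applied inside $L$ forces $\msphere[L]$ to be a parallelogram with $u$ as a vertex.

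The one gap in the previous paragraph is that I have only concluded $\msphere[L]$ is \emph{some} parallelogram with $u$ a vertex, which is exactly condition~(2) of Theorem~\ref{thm:char-unique-completion-segment} — so that is in fact precisely what is needed, no more. Therefore, once $L$ is handled for every $L\ni u$, Theorem~\ref{thm:char-unique-completion-segment} gives that $S=\msegment{-u}{u}$ has a unique completion, i.e.\ $\Mspace$ has property ($U_1$).

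The main obstacle I anticipate is the descent to subspaces: carefully justifying that property (B) (hyperconvexity) is inherited by two-dimensional subspaces, and in particular by the subspaces through the chosen extreme point $u$. The cleanest route is via $1$-complementability — if $P\colon\Mspace\to L$ is a norm-$1$ projection, then $P$ maps a point common to a family of balls $\mball(x_i,\gamma_i)$ (with centres $x_i\in L$) to a point of $L$ that still lies in every $\mball(x_i,\gamma_i)$, because $\norm{Px-x_i}=\norm{Px-Px_i}\le\norm{x-x_i}\le\gamma_i$. This shows $L$ is hyperconvex, hence has property (B), and the rest is the planar argument above. A secondary point to verify is the elementary planar fact that $\mball[L](-u,2)\cap\mball[L](u,2)$ is complete in $L$ (it is a completion of a segment, hence complete), so that property (B) in $L$ can be applied to it.
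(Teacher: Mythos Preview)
Your approach has two genuine gaps, and it is also much more circuitous than the paper's argument.

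\textbf{Gap 1: the descent to subspaces.} You write ``hyperconvex spaces are $1$-complemented, so the norm-$1$ projection onto $L$\ldots''. But $\Mspace$ being a $\mathcal{P}_1$ space means $\Mspace$ is $1$-complemented in every \emph{superspace}; it says nothing about whether a given $2$-dimensional subspace $L$ is $1$-complemented in $\Mspace$. In fact this fails: in $\ell_\infty^3$ the subspace $L'=\SPAN\{(1,1,0),(0,1,1)\}$ has a hexagonal unit ball, so $L'$ is not hyperconvex and hence not $1$-complemented in $\ell_\infty^3$. So property (B) does \emph{not} pass to arbitrary $2$-dimensional sections, and your route to ``$L$ has property (B)'' is blocked.

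\textbf{Gap 2: the wide spherical hull is not a completion.} You assert that ``in any Minkowski plane the intersection $\mball[L](-u,2)\cap\mball[L](u,2)$ is a complete set (a completion of the segment $\msegment{-u}{u}$)''. This intersection is $\WSH{S}$, the \emph{union} of all completions, and in general $\diam{\WSH{S}}>2$ (in the Euclidean plane it is a lens of diameter $2\sqrt{3}$). It equals $\mball[L]$ precisely when $S$ already has a unique completion --- which is what you are trying to prove --- so the argument is circular.

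\textbf{The paper's proof} avoids Theorem~\ref{thm:char-unique-completion-segment} entirely and works directly in $\Mspace$. Pick an extreme point $u$ of $\mball$ (Theorem~\ref{thm:existence-extreme-point}). Any completion of $S=\msegment{-u}{u}$ is complete with diameter $2$, hence by property~(B) is a ball $\mball(c,1)$ containing $\pm u$. Then $\norm{u-c}\le 1$ and $\norm{u+c}\le 1$, and since their sum is at least $\norm{2u}=2$, both equal $1$. If $c\neq\origin$, then $u=\tfrac12\bigl((u-c)+(u+c)\bigr)$ lies in the relative interior of a nontrivial segment in $\msphere$, contradicting extremality. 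Hence $c=\origin$ and $\mball$ is the unique completion. This is a three-line argument; the detour through two-dimensional sections is not needed.
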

\begin{proof}
  By Theorem \ref{thm:existence-extreme-point}, $\mball$ has an extreme point
  $u\in \msphere$. We show that $S:=\msegment{-u}{u}$ has a unique completion.
  Since $\Mspace$ has property (B), we only need to prove that $\mball$ is the
  only ball having radius $1$ and containing $S$. Otherwise, there exists a
  point $c\neq\origin$ such that $\norm{u+c}=\norm{u-c}=1$. It follows that $u$
  is the midpoint of a nontrivial segment $\msegment{u+c}{u-c}$ contained in
  $\msphere$, a contradiction to the fact that $u$ is an extreme point of
  $\mball$.
\end{proof}

\begin{rem}
  A Banach space $\Mspace$ having property ($U_1$) does not necessarily have
  property (B). Consider the body of revolution $K$ in $\Espace[3]$ obtained by
  rotating a square in $\Espace[2]$ around one of its diagonals. Then $K$ is the
  unit ball of a Banach space $\Mspace_K$ having property ($U_1$). Theorem 3 in
  \cite{Nachbin1950} shows that a three-dimensional Banach space has property
  (B) if and only if it is isometric to $\qty(\Espace[3],\norm{\cdot}_\infty)$
  , where
  \begin{displaymath}
    \norm{(\alpha,\beta,\gamma)}_\infty=\max\qty{|\alpha|,|\beta|,|\gamma|},~\forall (\alpha,\beta,\gamma)\in\Espace[3].
  \end{displaymath}
  Thus, $\Mspace_K$ does not have property (B). This implies that property (B)
  is strictly stronger than property ($U_1$).
\end{rem}

For each bounded closed set $K$, we put
\begin{displaymath}
  \gamma(K,x)=\sup\{\norm{x-a}:~a\in K\},~\forall x\in \Mspace
\end{displaymath}
and
\begin{displaymath}
  \gamma(K)=\inf\SET{\gamma(K,x)}{x\in\Mspace}.
\end{displaymath}
The number $\gamma(K)$ is called the \emph{circumradius} of $K$.

\begin{lem}[Remark 3.2 in \cite{Papini2015}]\label{lem:char-balls-Papini}
  A complete set $K$
  in a Banach space $\Mspace$
 satisfying
  \begin{displaymath}
    \gamma(K)=\frac{\diam{K}}{2}
  \end{displaymath}
  is a ball.
\end{lem}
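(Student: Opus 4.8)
The plan is to exploit completeness in the form $K=\WSH{K}$ together with a limiting argument that sidesteps the possible non-attainment of the circumradius in a general Banach space.

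Write $d:=\diam{K}$, so $d>0$ since $K$ is nontrivial. Two preliminary remarks. First, for every $x\in\Mspace$ and all $a,b\in K$ one has $\norm{a-b}\leq\norm{a-x}+\norm{x-b}\leq 2\gamma(K,x)$, so $\gamma(K,x)\geq d/2$; thus the hypothesis says that the infimum $\gamma(K)$ attains the least value it ever could. Second, since $K$ is complete, $K=\WSH{K}=\SET{z\in\Mspace}{\norm{z-y}\leq d,~\forall y\in K}$: indeed $z\in\WSH{K}$ forces $\diam{K\cup\{z\}}=d$ and hence $z\in K$ by completeness (equivalently, $K$ is its own completion and $\WSH{\cdot}$ is the union of all completions).

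Now, for each $\varepsilon\in(0,d/2)$ use $\gamma(K)=d/2$ to pick a point $x_\varepsilon$ with $K\subseteq\mball(x_\varepsilon,d/2+\varepsilon)$. The key step is that then also $\mball(x_\varepsilon,d/2-\varepsilon)\subseteq K$: if $\norm{z-x_\varepsilon}\leq d/2-\varepsilon$, then $\norm{z-y}\leq\norm{z-x_\varepsilon}+\norm{x_\varepsilon-y}\leq(d/2-\varepsilon)+(d/2+\varepsilon)=d$ for every $y\in K$, whence $z\in\WSH{K}=K$. So $K$ is squeezed between two concentric balls whose radii differ by $2\varepsilon$. From $\mball(x_\varepsilon,d/2-\varepsilon)\subseteq K\subseteq\mball(x_{\varepsilon'},d/2+\varepsilon')$, together with the standard fact that a closed ball of positive radius $r$ is contained in a closed ball of radius $s$ exactly when the distance between the centres is at most $s-r$ (the nontrivial direction being a one-line Hahn--Banach argument using a nearly norming functional), one gets $\norm{x_\varepsilon-x_{\varepsilon'}}\leq\varepsilon+\varepsilon'$. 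Hence $x_\varepsilon$ converges, as $\varepsilon\to0^+$, to some $x_0\in\Mspace$, because $\Mspace$ is a Banach space.

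It remains to pass to the limit. Fixing $y\in K$ and letting $\varepsilon\to0$ in $\norm{y-x_\varepsilon}\leq d/2+\varepsilon$ yields $K\subseteq\mball(x_0,d/2)$. Conversely, if $\norm{z-x_0}<d/2$, then for $\varepsilon$ small enough $\norm{z-x_\varepsilon}\leq\norm{z-x_0}+\norm{x_0-x_\varepsilon}\leq d/2-\varepsilon$, so $z\in\mball(x_\varepsilon,d/2-\varepsilon)\subseteq K$; since $K$ is closed, this gives $\mball(x_0,d/2)\subseteq K$. Therefore $K=\mball(x_0,d/2)$ is a ball. I expect the only delicate point to be the non-attainment of the circumradius, which is precisely what the Cauchy/limit argument and the completeness of $\Mspace$ are there to handle --- and the reason the statement is phrased for Banach spaces; everything else is routine triangle-inequality bookkeeping.
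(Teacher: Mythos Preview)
The paper does not prove this lemma; it simply cites it as Remark~3.2 in \cite{Papini2015}. So there is nothing to compare against, but your argument is correct and self-contained. The care you take over the possible non-attainment of the circumradius is exactly what is needed in a general Banach space, and it is what forces the completeness hypothesis on $\Mspace$; the sandwich $\mball(x_\varepsilon,d/2-\varepsilon)\subseteq K\subseteq\mball(x_{\varepsilon'},d/2+\varepsilon')$ together with the Cauchy estimate is the right mechanism.

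One minor simplification: the ball-inclusion fact you invoke, that $\mball(c_1,r)\subseteq\mball(c_2,s)$ with $r>0$ forces $\norm{c_1-c_2}\leq s-r$, does not actually require Hahn--Banach. If $c_1\neq c_2$, the point
\begin{displaymath}
  p:=c_1+r\,\frac{c_1-c_2}{\norm{c_1-c_2}}
\end{displaymath}
lies in $\mball(c_1,r)\subseteq\mball(c_2,s)$, and $\norm{p-c_2}=\norm{c_1-c_2}+r\leq s$ gives the claim directly.
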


\begin{lem}\label{lem:char-intersection-balls-general}
  Let $\Mspace$ be a normed linear space, and $\msegment{x}{y}$ be a segment
  having a unique completion. Then, for each $\gamma\geq
  \flatfrac{\norm{x-y}}{2}$, we have
  \begin{displaymath}
    \mball(x,\gamma)\cap \mball(y,\gamma)=\mball\qty(\frac{x+y}{2},\gamma-\frac{\norm{x-y}}{2}).
  \end{displaymath}
\end{lem}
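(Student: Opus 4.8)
The plan is to normalize the segment and then reduce the claimed identity to a one-variable estimate inside a single two-dimensional subspace, using Theorem~\ref{thm:char-unique-completion-segment} to control the geometry there. Concretely, set $m=\frac{x+y}{2}$, $a=\frac{\norm{x-y}}{2}$, and $u=\frac{y-x}{\norm{y-x}}\in\msphere$, so that $x=m-au$ and $y=m+au$. Since $\msegment{x}{y}$ has direction $u$ and a unique completion, the Lemma preceding Theorem~\ref{thm:char-unique-completion-segment} shows that $\msegment{-u}{u}$ also has a unique completion; hence, by Theorem~\ref{thm:char-unique-completion-segment}, for every two-dimensional subspace $L$ of $\Mspace$ with $u\in L$ the unit circle $\msphere[L]$ is a parallelogram having $u$ among its vertices, i.e.\ there is $v\in\msphere[L]$ with $\norm{\alpha u+\beta v}=|\alpha|+|\beta|$ for all $\alpha,\beta\in\Real$. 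By translation invariance of the norm it suffices to treat $m=\origin$ and to prove
\begin{displaymath}
  \mball(-au,\gamma)\cap\mball(au,\gamma)=\mball(\origin,\gamma-a)\qquad(\gamma\geq a).
\end{displaymath}

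Next I would dispose of the easy inclusion: if $\norm{w}\leq\gamma-a$, then $\norm{w\mp au}\leq\norm{w}+a\leq\gamma$, so $\mball(\origin,\gamma-a)\subseteq\mball(-au,\gamma)\cap\mball(au,\gamma)$, using only the triangle inequality and $\gamma-a\geq0$. For the reverse inclusion, fix $w$ with $\norm{w-au}\leq\gamma$ and $\norm{w+au}\leq\gamma$. If $w\in\SPAN\qty{u}$, say $w=tu$, then $|t+a|\leq\gamma$ and $|t-a|\leq\gamma$ force $|t|\leq\gamma-a$, so $\norm{w}\leq\gamma-a$. If $w$ and $u$ are linearly independent, set $L=\SPAN\qty{u,w}$ and take $v\in\msphere[L]$ with $\norm{\alpha u+\beta v}=|\alpha|+|\beta|$ for all $\alpha,\beta\in\Real$, as provided above; writing $w=\alpha u+\beta v$, the two constraints become $|\alpha-a|+|\beta|\leq\gamma$ and $|\alpha+a|+|\beta|\leq\gamma$, and since $\max\qty{|\alpha-a|,|\alpha+a|}=|\alpha|+a$ we obtain $\norm{w}=|\alpha|+|\beta|\leq\gamma-a$. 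In both cases $w\in\mball(\origin,\gamma-a)$, which finishes the proof.

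I expect the only genuine subtlety to be the reduction step rather than the estimates: one must first use the direction lemma to pass from the arbitrary segment $\msegment{x}{y}$ to $\msegment{-u}{u}$ (so that Theorem~\ref{thm:char-unique-completion-segment} applies to a true unit vector), and then notice that the parallelogram structure is available in \emph{every} plane through $u$, in particular in $\SPAN\qty{u,w}$ for the point $w$ being tested. The separate handling of $w\in\SPAN\qty{u}$ is forced only because that plane degenerates there; the heart of the computation is the elementary identity $\max\qty{|\alpha-a|,|\alpha+a|}=|\alpha|+a$, which collapses the two ball constraints into a single $\ell_1$-type bound.
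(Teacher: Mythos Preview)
Your proof is correct and follows essentially the same route as the paper's: both establish the easy inclusion by the triangle inequality, then for the reverse inclusion split into the collinear case and the generic case, pass to the two-dimensional subspace $L=\SPAN\qty{u,w}$, invoke Theorem~\ref{thm:char-unique-completion-segment} to obtain the $\ell_1$-parallelogram structure on $L$, and reduce the two ball constraints to $|\alpha|+|\beta|\leq\gamma-a$ via $\max\qty{|\alpha-a|,|\alpha+a|}=|\alpha|+a$. The only cosmetic differences are that the paper normalizes to $y=-x\in\msphere$ (so $a=1$) and phrases the planar computation via an explicit isometry to $(\Espace[2],\norm{\cdot}_1)$, whereas you keep $a$ general and work directly in parallelogram coordinates.
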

\begin{proof}
  Without loss of generality, assume that $y=-x\in\msphere$. It is clear that
  \begin{displaymath}
    \mball\qty(\frac{x+y}{2},\gamma-\frac{\norm{x-y}}{2})\subseteq\mball(x,\gamma)\cap \mball(y,\gamma).
  \end{displaymath}
  To show the reverse inclusion, let $w$ be an arbitrary point in
  $\mball(x,\gamma)\cap \mball(y,\gamma)$. The case when $w\in\mline{x}{y}$ is
  clear. In the following we assume that $w$ and $x$ are linearly independent.
  Let $L$ be the Minkowski plane spanned by $x$ and $w$. By Theorem
  \ref{thm:char-unique-completion-segment}, $S_L$ is a parallelogram having $x$
  as one of its vertices. By applying a suitable isometry if necessary, we may
  assume that $L=(\Espace[2],\norm{\cdot}_1)$ and $x=(1,0)$
  , where
  \begin{displaymath}
    \norm{(\alpha,\beta)}_1=|\alpha|+|\beta|,~\forall (\alpha,\beta)\in\Espace[2].
  \end{displaymath}
  It follows that
  \begin{align*}
    w\in B_L(x,\gamma)\cap B_L(y,\gamma)&=\SET{(\alpha,\beta)}{\qty|\alpha-1|+\qty|\beta|\leq\gamma\text{~
                                          and~}\qty|\alpha+1|+\qty|\beta|\leq\gamma}\\
                                        &=\SET{(\alpha,\beta)}{\qty|\alpha|+\qty|\beta|\leq\gamma-1}\\
                                        &=B_L\qty(\frac{x+y}{2},\gamma-\frac{\norm{x-y}}{2})\\
                                        &\subseteq\mball\qty(\frac{x+y}{2},\gamma-\frac{\norm{x-y}}{2}),
  \end{align*}
  which completes the proof.
\end{proof}

\begin{thm}
  If a Banach space $\Mspace$ has property ($U_1$), then it has also property (C).
\end{thm}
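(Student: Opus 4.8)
The plan is to prove that an arbitrary set $W$ of constant width in $\Mspace$ is a ball by checking the hypotheses of Lemma~\ref{lem:char-balls-Papini} for $W$. Write $w:=\diam{W}$. Two of the needed facts are cheap. First, $W$ is complete — this is the standard fact that sets of constant width are complete: if $x\notin W$, separate $x$ from the closed convex set $W$ by some $f\in\msphere[X^*]$ with $f(x)>\sup f(W)$, pick $y_n\in W$ with $f(y_n)\to\inf f(W)$, and observe that $\norm{x-y_n}\geq f(x-y_n)\to f(x)-\inf f(W)>w_f(W)=w$, so $\diam{W\cup\{x\}}>w$. Second, $\gamma(W)\geq w/2$ holds automatically, since $\norm{a-b}\leq\norm{a-z}+\norm{z-b}\leq 2\gamma(W,z)$ for all $a,b\in W$ and all $z\in\Mspace$. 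Thus the entire proof reduces to showing $\gamma(W)\leq w/2$; then $\gamma(W)=w/2=\diam{W}/2$, and since $W$ is complete, Lemma~\ref{lem:char-balls-Papini} yields that $W$ is a ball.

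To get $\gamma(W)\leq w/2$ I would use property ($U_1$). By Theorem~\ref{thm:char-unique-completion-segment}, fix $u\in\msphere$ such that the unit circle $\msphere[L]$ of every two-dimensional subspace $L$ of $\Mspace$ containing $u$ is a parallelogram with $u$ as a vertex; then (by the lemma following Theorem~\ref{thm:Papini}, using translation invariance of the metric) every nontrivial segment of direction $u$ has a unique completion. The crucial geometric input is that, for each $\varepsilon\in(0,w)$, there exist $a_\varepsilon,b_\varepsilon\in W$ with $a_\varepsilon-b_\varepsilon=(w-\varepsilon)u$; this is where constant width enters, because $w_f(W)=w=w\norm{f}$ for every functional $f$, so the convex, symmetric set $W-W$ has the same support function as $w\mball$, hence $\CL(W-W)=w\mball$, and $wu$ is approximated arbitrarily well along the ray $\Real_{>0}u$ by differences of points of $W$. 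Granting such $a_\varepsilon,b_\varepsilon$, the segment $\msegment{b_\varepsilon}{a_\varepsilon}$ is nontrivial of direction $u$, hence has a unique completion, so Lemma~\ref{lem:char-intersection-balls-general} with $\gamma=w\ (\geq\norm{a_\varepsilon-b_\varepsilon}/2)$ gives
\begin{displaymath}
  \mball(a_\varepsilon,w)\cap\mball(b_\varepsilon,w)=\mball\qty(\tfrac{a_\varepsilon+b_\varepsilon}{2},\ \tfrac{w+\varepsilon}{2}).
\end{displaymath}
Since $a_\varepsilon,b_\varepsilon\in W$ and $\diam{W}=w$, every point of $W$ lies within distance $w$ of both $a_\varepsilon$ and $b_\varepsilon$, so $W\subseteq\mball(a_\varepsilon,w)\cap\mball(b_\varepsilon,w)=\mball(c_\varepsilon,(w+\varepsilon)/2)$ with $c_\varepsilon:=(a_\varepsilon+b_\varepsilon)/2$. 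Therefore $\gamma(W)\leq\gamma(W,c_\varepsilon)\leq(w+\varepsilon)/2$ for every $\varepsilon$, which forces $\gamma(W)\leq w/2$.

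I expect the only delicate point to be the extraction of the near-diametral pair $a_\varepsilon,b_\varepsilon$ in the direction $u$, i.e. checking that the equality $\CL(W-W)=w\mball$ is actually realized along the ray $\Real_{>0}u$. When $W$ is compact — in particular in the finite-dimensional case — this is immediate: $W-W$ is then closed and equals $w\mball$ outright, one gets $a-b=wu$ with $a,b\in W$, may take $\varepsilon=0$, and lands $W$ directly inside a ball of radius $w/2$. In general one needs that the convex set $W-W$ is thick enough in the direction $u$ to reach $wu$ in the limit; this holds, for instance, whenever $W-W$ has nonempty interior, for then $\INT(W-W)=\INT(w\mball)=\SET{v}{\norm{v}<w}$ already contains $(w-\varepsilon)u$, and I would isolate this as a small preliminary observation. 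Everything else is a routine combination of Lemmas~\ref{lem:char-balls-Papini} and~\ref{lem:char-intersection-balls-general} with the triangle inequality.
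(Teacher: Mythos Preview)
Your proof is correct and follows essentially the same route as the paper's: both arguments reduce to Lemma~\ref{lem:char-balls-Papini} by showing $\gamma(W)\leq \diam{W}/2$, and both obtain this bound by picking near-diametral pairs in $W$ in the direction $u$ (via the fact that $W-W$ contains the open ball of radius $\diam{W}$) and then applying Lemma~\ref{lem:char-intersection-balls-general} with $\gamma=\diam{W}$. Your treatment is slightly more explicit about completeness of $W$ and about the ``$W-W\supseteq\INT(\diam{W}\,\mball)$'' step, which the paper simply cites as a known characterization of constant width.
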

\begin{proof}
  Suppose that there exists a segment whose direction is $u\in\msphere$ and
  which admits a unique completion. Then each segment parallel to the line
  $\mline{-u}{u}$ has a unique completion.
  \par
  Let $K$ be a set of constant width. We may assume that $\diam{K}=1$. Then $K$
  is complete. To show that $K$ is a ball we only need to show that
  $\gamma(K)=\flatfrac{\diam{K}}{2}$.
  \par
  By the characterization of sets of constant width, $K-K$ contains the interior
  of $\mball$. Therefore, for each $\varepsilon\in(0,1)$ there exist two points
  $x_\varepsilon$ and $y_\varepsilon$ in $K$ such that
  $x_\varepsilon-y_\varepsilon=(1-\varepsilon)u$. Put
  \begin{displaymath}
    c_\varepsilon=\frac{x_\varepsilon+y_\varepsilon}{2}.
  \end{displaymath}
  We have
  \begin{align*}
    K=\WSH{K}&=\bigcap\limits_{w\in K}\mball(w,\diam{K})\\
             &\subseteq \mball(x_\varepsilon,\diam{K})\cap\mball(y_\varepsilon,\diam{K})\\
             &=\mball\qty(c_\varepsilon,\diam{K}-\frac{1-\varepsilon}{2})\\
             &=\mball\qty(c_\varepsilon,\frac{1+\varepsilon}{2}).
  \end{align*}
  It follows that
  \begin{displaymath}
    \gamma(K)\leq\frac{1+\varepsilon}{2},~\forall \varepsilon\in(0,1).
  \end{displaymath}
  Hence
  \begin{displaymath}
    \gamma(K)=\frac{1}{2}=\frac{1}{2}\diam{K}. 
  \end{displaymath}
  By Lemma \ref{lem:char-balls-Papini}, $K$ is a ball.
\end{proof}

\begin{rem}
  Note that, for our purpose here, a compact, convex set $K$ centred at the
  origin is called \emph{reducible} if there is a non-symmetric compact convex
  set $L$ such that $K = L-L$ holds; otherwise $K$ is said to be
  \emph{irreducible}. Based on this notion, Yost \cite{Yost1991} says that the
  unit ball $\mball$ of a Minkowski space $X$ is reducible if and only if
  $\Mspace$ contains sets of constant width which are not balls. For instance,
  regular icosahedra are irreducible, and hence a 3-dimensional Minkowski space
  $\Mspace$ having a regular icosahedron as unit ball satisfies property (C).
  However, there is no segment in this space having a unique completion. Thus
  property (C) is strictly weaker than property ($U_1$). Therefore, property
  ($U_1$) lies strictly between property (B) and property (C).
\end{rem}

\begin{rem}
  Property ($U_1$) does not imply property (A). For example, let
  $\Mspace=(\Espace[3],\norm{\cdot}_{1})$
  , where
  \begin{displaymath}
    \norm{(\alpha,\beta,\gamma)}_1=|\alpha|+|\beta|+|\gamma|,~\forall (\alpha,\beta,\gamma)\in\Espace[3],
  \end{displaymath}
  and
  $K=\CH\{(-1,-1,-1),(1,1,-1),(1,-1,1),(-1,1,1)\}$. Then $\Mspace$ has property
  ($U_1$), $\diam{K}=4$, $K$ is complete, and $K$ is not of constant width.
  Property (A) does not imply property ($U_1$) either. This can be seen by the
  fact that each Minkowski plane has property (A)
  (cf. \cite[p. 171]{Eggleston1965})
 while not every such plane has
  property ($U_1$). We note that, for a Minkowski space, properties (A), (D),
  and (E) are equivalent, cf. \cite[Proposition 1]{Moreno-Schneider2012IJM}.
\end{rem}

\section{Higher dimensions}
\label{sec:higher-dim}

Let $\Mspace$ be a normed linear space containing at least $n$ linearly
independent vectors, and $m$ be a positive integer not greater than $n$. We say that $\Mspace$
\begin{itemize}
\item \emph{has property ($U_m^b$) if there exists an $m$-dimensional equilateral
simplex in $\Mspace$ whose unique completion is a ball,}
\item \emph{has property ($U_m$) if there exists an $m$-dimensional equilateral
  simplex in $\Mspace$ having a unique completion.}
\end{itemize}
Clearly, a space having property ($U_m^b$) has also property ($U_m$),
and the Euclidean plane $\Mspace$ has property ($U_2$) but does not have
property ($U_2^b$).

\begin{prop}
  Let $\Mspace$ be a normed linear space whose dimension is at least $2$. If
  $\Mspace$ has property ($U_1$), then it has also properties ($U_2^b$) and
  ($U_2$).
\end{prop}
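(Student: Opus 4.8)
The plan is to establish the formally stronger property $(U_2^b)$; since every space with property $(U_2^b)$ automatically has property $(U_2)$, this proves the proposition. The first step is to invoke the Corollary to Theorem~\ref{thm:char-unique-completion-segment}: property $(U_1)$ yields a unit vector $u\in\msphere$ such that the unit circle of every two-dimensional subspace of $\Mspace$ through $u$ is a parallelogram having $u$ as a vertex, and in particular the segment $\msegment{-u}{u}$ has a unique completion. Since $\DIM\Mspace\ge 2$, fix one such two-dimensional subspace $L$ and write $\msphere[L]$ as the parallelogram with vertex set $\qty{\pm u,\pm v}$ for a suitable $v\in\msphere[L]$; equivalently, $L$ is linearly isometric to $(\Espace[2],\norm{\cdot}_1)$ via $u\mapsto(1,0)$ and $v\mapsto(0,1)$, so that $u$, $-u$, and $v$ are pairwise at distance $2$.

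Next I would consider the triangle $T=\CH\qty{u,-u,v}\subset L\subset\Mspace$. It is affinely two-dimensional (the vectors $-2u$ and $v-u$ are linearly independent), equilateral with all side lengths equal to $\diam{T}=2$, and contained in $\mball$. The core of the argument is the computation of the wide spherical hull $\WSH{T}$ \emph{in $\Mspace$}. Since $y\mapsto\norm{x-y}$ is convex, its maximum over $T$ is attained at a vertex of $T$, hence
\begin{displaymath}
  \WSH{T}=\mball(u,2)\cap\mball(-u,2)\cap\mball(v,2).
\end{displaymath}
Applying Lemma~\ref{lem:char-intersection-balls-general} to the segment $\msegment{-u}{u}$ (which has a unique completion) with $\gamma=2$ gives $\mball(u,2)\cap\mball(-u,2)=\mball(\origin,1)=\mball$, while $\norm{z-v}\le\norm{z}+\norm{v}\le 2$ for every $z\in\mball$ shows $\mball\subseteq\mball(v,2)$. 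Therefore $\WSH{T}=\mball$.

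Finally, $\mball$ is complete and $\diam{\mball}=2=\diam{T}$, so $\diam{\WSH{T}}=\diam{T}$, which by the criterion recalled in the Introduction means that $T$ has a unique completion; since $\mball$ is complete, contains $T$, and has diameter equal to $\diam{T}$, it is a completion of $T$ and hence the unique one. Thus the equilateral $2$-simplex $T$ has the ball $\mball$ as its unique completion, so $\Mspace$ has property $(U_2^b)$ and therefore also property $(U_2)$. I do not expect a genuine obstacle here; the only substantive point is the choice of $T$, namely pairing the two endpoints of the unique-completion segment with a single vertex of the parallelogram $\msphere[L]$, which is exactly what forces $\WSH{T}$ to collapse onto $\mball$.
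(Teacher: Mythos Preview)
Your proof is correct and uses the same triangle $T=\CH\qty{u,-u,v}$ as the paper. The only difference is presentational: you explicitly compute $\WSH{T}=\mball$ via Lemma~\ref{lem:char-intersection-balls-general} and then apply the diameter criterion, whereas the paper simply observes that, since $\msegment{-u}{u}\subset T$ and $\diam{T}=\diam{\msegment{-u}{u}}$, every completion of $T$ is already a completion of $\msegment{-u}{u}$, which has the ball $\mball$ as its unique completion.
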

\begin{proof}
  Since $\Mspace$ has property ($U_1$), there exists a unit vector $u$ such
  that the unit sphere of each two-dimensional subspace of $\Mspace$ that
  contains $u$ is a parallelogram having $u$ as one of its vertices. Let $L$ be
  a two-dimensional subspace of $\Mspace$ containing $u$, and $v$ be a vertex of
  the parallelogram $\msphere[L]$ adjacent to $u$. Then the triangle $T$ with
  $\qty{-u,v,u}$ as vertex set is an equilateral $2$-dimensional simplex
  whose diameter is $2$. Each completion of $T$ is a completion of
  $\msegment{-u}{u}$. On the other hand, $\msegment{-u}{u}$ has one unique
  completion that is a ball. Thus, the unique completion of $T$ is a ball, which
  shows that $\Mspace$ has properties ($U_2^b$) and ($U_2$).
\end{proof}

\begin{rem}
  In the proof of the proposition above, we used the fact that a closed segment
  having unique completion can be extended to an equilateral triangle whose
  unique completion is a ball. However, when the dimension of the underlying
  space is at least three, then an equilateral triangle, whose unique completion
  is a ball, in general cannot be extended to an equilateral $3$-simplex. For a
  concrete example we use the idea in the proof of Theorem 4 in
  \cite{Petty1971} (a nice and comprehensive survey on equilateral sets in
  normed spaces is \cite{Swanepoel2004}). Put
  \begin{displaymath}
    K=\CH(\SET{(\alpha,\beta,0)}{\alpha,\beta\in\Real,~\alpha^2+\beta^2=1}\cup\qty{(0,0,1),(0,0,-1)}),
  \end{displaymath}
  and $\Mspace$ be the Minkowski space having $K$ as unit ball. Then
  $u=(0,0,1)$, $-u$, and $v=(1,0,0)$ are the vertices of an equilateral triangle
  $T$ whose unique completion is $K$. It is not difficult to verify that
  \begin{displaymath}
    \SET{x\in\Mspace}{\norm{x+u}=\norm{x-u}=2}=\SET{(\alpha,\beta,0)}{\alpha^2+\beta^2=1}.
  \end{displaymath}
  Therefore, $-v$ is the unique point $y$ in $\Mspace$ such that
  \begin{displaymath}
    \norm{y-v}=\norm{y+u}=\norm{y-u}=2.
  \end{displaymath}
  Thus, $T$ cannot be extended to an equilateral $3$-simplex. However, this does
  not rule out the possibility that property ($U_2$) implies property ($U_3$). 
\end{rem}

\bibliographystyle{plain}

\bibliography{xbib}

\begin{thebibliography}{10}

\bibitem{Alonso-Martini-Wu2012}
J.~Alonso, H.~Martini, and Senlin Wu.
\newblock On {B}irkhoff orthogonality and isosceles orthogonality in normed
  linear spaces.
\newblock {\em Aequationes Math.}, 83(1-2):153--189, 2012.

\bibitem{Aronszajn-Panitchpakdi1956}
N.~Aronszajn and P.~Panitchpakdi.
\newblock Extension of uniformly continuous transformations and hyperconvex
  metric spaces.
\newblock {\em Pacific J. Math.}, 6:405--439, 1956.

\bibitem{Baronti-Papini1995}
M.~Baronti and P.L. Papini.
\newblock Diameters, centers and diametrically maximal sets.
\newblock {\em Rend. Circ. Mat. Palermo (2) Suppl.}, (38):11--24, 1995.
\newblock IVth Italian Conference on Integral Geometry, Geometric Probability
  Theory and Convex Bodies (Italian) (Bari, 1994).

\bibitem{Caspani-Papini2015}
L.~Caspani and P.L. Papini.
\newblock On constant width sets in {H}ilbert spaces and around.
\newblock {\em J. Convex Anal.}, 22(3):889--900, 2015.

\bibitem{Chakerian-Groemer1983}
G.D. Chakerian and H.~Groemer.
\newblock Convex bodies of constant width.
\newblock In P.~M. Gruber and J.~M. Wills, editors, {\em Convexity and its
  Applications}, pages 49--96. Birkh\"auser, Basel, 1983.

\bibitem{Davis1977}
W.J. Davis.
\newblock A characterization of {$P_{1}$} spaces.
\newblock {\em J. Approximation Theory}, 21(4):315--318, 1977.

\bibitem{Eggleston1965}
H.G. Eggleston.
\newblock Sets of constant width in finite dimensional {B}anach spaces.
\newblock {\em Israel J. Math.}, 3:163--172, 1965.

\bibitem{Fuster2001}
E.L. Fuster.
\newblock Some moduli and constants related to metric fixed point theory.
\newblock In {\em Handbook of metric fixed point theory}, pages 133--175.
  Kluwer Acad. Publ., Dordrecht, 2001.

\bibitem{Heil-Martini1993}
E.~Heil and H.~Martini.
\newblock Special convex bodies.
\newblock In {\em Handbook of convex geometry, {V}ol.\ {A}, {B}}, pages
  347--385. North-Holland, Amsterdam, 1993.

\bibitem{Martini-Richter-Spirova2013}
H.~Martini, C.~Richter, and M.~Spirova.
\newblock Intersections of balls and sets of constant width in
  finite-dimensional normed spaces.
\newblock {\em Mathematika}, 59(2):477--492, 2013.

\bibitem{Martini-Swanepoel2004}
H.~Martini and K.J. Swanepoel.
\newblock The geometry of {M}inkowski spaces---a survey. {II}.
\newblock {\em Expo. Math.}, 22(2):93--144, 2004.

\bibitem{Moreno-Schneider2012IJM}
J.P. Moreno and R.~Schneider.
\newblock Diametrically complete sets in {M}inkowski spaces.
\newblock {\em Israel J. Math.}, 191(2):701--720, 2012.

\bibitem{Nachbin1950}
L.~Nachbin.
\newblock A theorem of the {H}ahn-{B}anach type for linear transformations.
\newblock {\em Trans. Amer. Math. Soc.}, 68:28--46, 1950.

\bibitem{Papini2014}
P.L. Papini.
\newblock Complete sets and surroundings.
\newblock In {\em Banach and {F}unction {S}paces {IV} ({ISBFS} 2012)}, pages
  149--163. Yokohama Publ., Yokohama, 2014.

\bibitem{Papini2015}
P.L. Papini.
\newblock Completions and balls in {B}anach spaces.
\newblock {\em Ann. Funct. Anal.}, 6(1):24--33, 2015.

\bibitem{Petty1971}
C.M. Petty.
\newblock Equilateral sets in {M}inkowski spaces.
\newblock {\em Proc. Amer. Math. Soc.}, 29:369--374, 1971.

\bibitem{Swanepoel2004}
K.J. Swanepoel.
\newblock Equilateral sets in finite-dimensional normed spaces.
\newblock In {\em Seminar of {M}athematical {A}nalysis}, volume~71 of {\em
  Colecc. Abierta}, pages 195--237. Univ. Sevilla Secr. Publ., Seville, 2004.

\bibitem{Yost1991}
D.~Yost.
\newblock Irreducible convex sets.
\newblock {\em Mathematika}, 38:134--155, 1991.

\end{thebibliography}

\end{document}